\documentclass[a4paper,11pt]{article}

\usepackage[margin=2cm]{geometry}
\usepackage[utf8]{inputenc}
\usepackage[T1]{fontenc}
\usepackage{amsfonts,amsmath,amssymb,amscd,amstext,amsthm}
\usepackage{textcomp,multicol}
\usepackage[shortlabels]{enumitem}
\usepackage{mathrsfs}\let\mathcal\mathscr
\usepackage{stmaryrd}
\usepackage{xcolor}
\usepackage{tikz}
\usetikzlibrary{decorations.pathreplacing}
\usepackage{hyperref}

\newcommand{\N}{\mathbb{N}}
\let\NM=\N
\renewcommand{\P}{\mathbb{P}}
\let\PM=\P
\newcommand{\Q}{\mathbb{Q}}
\let\QM=\Q

\newcommand{\Z}{\mathbb{Z}}
\let\ZM=\Z

\newcommand{\fg}{\mathbf{f}}

\newcommand{\sg}{\mathbf{s}}
\newcommand{\tg}{\mathbf{t}}
\newcommand{\ug}{\mathbf{u}}
\let\llb=\llbracket
\let\rrb=\rrbracket
\DeclareMathOperator{\GL}{GL}

\DeclareMathOperator{\SL}{SL}
\DeclareMathOperator{\PGL}{PGL}
\DeclareMathOperator{\PSL}{PSL}

\newcommand{\If}{\mathrm{I}}

\let\a=\alpha
\let\ga=\gamma
\let\Ga=\Gamma
\let\eps=\varepsilon

\newtheorem{theo}{Theorem}[section]
\newtheorem{coro}[theo]{Corollary}
\newtheorem{prop}[theo]{Proposition}
\newtheorem{lemm}[theo]{Lemma}

\theoremstyle{definition}
\newtheorem*{defi}{Definition}
\newtheorem{exo}{Exercise}
\newcommand{\be}{\begin{exo}}
\newcommand{\ee}{\end{exo}}

\theoremstyle{remark}
\newtheorem*{nota}{Notation}

\newtheorem*{exas}{Examples}

\newtheorem{exos}{Exercise 1-\!\!}
\newcommand{\bes}{\begin{exos}}
\newcommand{\ees}{\end{exos}}

\theoremstyle{remark}
\newtheorem*{rema}{Remark}

\definecolor{slateblue}{RGB}{72,61,139}

\newcommand{\cdotq}{\underset{q}{\cdot}}

\title{Lattice paths on snake graphs for $q$-deformed rationals}
\author{Jérôme Germoni}

\begin{document}
\maketitle

\paragraph{Abstract.} We give an elementary, straightforward proof of Ovenhouse's interpretation of the denominator and the numerator of a (right) $q$-deformed rational~$[m/n]_q=[m/n]_q^\sharp$ in terms of weighted lattice paths on a ``snake graph'', and we give an analogous result for the left $q$-deformed rational~$[m/n]_q^\flat$ introduced by Bapat, Becker, and Licata. The key ingredient is a simple formula for the $q$-deformation of the matrix of the composition of two homographies of the form $z\mapsto a+1/z$.

\section{Introduction}

Deformations of rationals, the so-called $q$-rationals, were introduced by Sophie Morier-Genoud and Valentin Ovsienko (\cite{MG-O_0}, \cite{MG-O_1}, \cite{MG-O_2}) as a generalisation of Gauss's polynomial deformation of integers (\cite{Gauss}). The construction is based on a deformation of the usual action of~$\PSL_2(\ZM)$ on~$\QM\cup\{\infty\}$ into an action on~$\QM(q)\cup\{\infty\}$. The numerators and the denominators of the fractions thus produced were given a combinatorial interpretation in terms of ideals of a poset associated to a \emph{fence graph}. Later, Nicholas Ovenhouse \cite{Ovenhouse} translated this in terms of weighted paths on a graph called a \emph{snake graph}. Recently, Christophe Aval and Sébastien Labbé~\cite{AL} gave other combinatorial interpretations.

As noticed by Ovsienko~\cite{Ovsienko}, the term \emph{snake graph} was coined by  James Propp~\cite{Propp}, but the graphs themselves have been known for a long time under various names. İlke Çanakçı and Ralf Schiffler used them in~\cite{CS_1}, then they related paths on snake graphs to continued fractions in~\cite{CS_2}. Besides, more recently, Asilata Bapat, Louis Becker, and Anthony~M. Licata introduced a ``left'' deformation of rationals~\cite{BBL}.

In this note, we provide a simple, direct relation between numerators and denominators of~$q$-rationals and weighted paths on snake graphs, i.e. a direct proof of Ovenhouse's results. The key remark is that while the basic operation for continued fractions, namely $z\mapsto a+1/z$, corresponds to a matrix of determinant~$-1$, the composition of two of these can be $q$-deformed (proposition~\ref{A1A2q}), and the corresponding matrix has a simple combinatorial significance (proposition~\ref{A1A2q_chemins}). This procedure allows to skip posets, but also the triangular graphs and the Farey graph that appeared in the initial articles.

In all the article, we denote by $q$ an indeterminate/formal variable. The set~$\NM$ contains zero.

\section{Construction of $q$-deformed rationals}

It is well-known that the group~$\SL_2(\ZM)$ of $2\times2$ integer matrices with determinant~$1$ is generated by
\[S=\begin{pmatrix}0&-1\\1&0\end{pmatrix}\quad\text{and}\quad
T=\begin{pmatrix}1&1\\0&1\end{pmatrix}.\]

These matrices satisfy the relations $S^2=(ST)^3=-\If_2$, where~$\If_2$
is the identity matrix, so that their respective images~$s$ and~$t$
generate the quotient group $\PSL_2(\Z)=\SL_2(\ZM)/\{\pm\If_2\}$, which
is presented by~$s$ and~$t$ and relations
\[s^2=(st)^3=1.\]

There is a very classical action of~$\SL_2(\ZM)$ on~$\P^1(\Q)=\QM\cup\{\infty\}$, where a matrix $A=\left(\begin{smallmatrix}a&b\\c&d\end{smallmatrix}\right)$ acts on~$z$ by $A\cdot z=(az+b)/(cz+d)$. The same notation will be used throughout with~$A$ and~$z$ over other base rings. When we deal with homographies, we follow the usual conventions about infinity, namely: $a/0=\infty$ if~$a\ne0$ and $(a\infty+b)/(c\infty+d)=a/c$ for $a$,~$b$,~$c$,~$d$ in~$\Q$ or~$\QM(q)$ such that $ad-bc\ne0$. The kernel of the $\SL_2(\ZM)$-action is~$\{\pm\If_2\}$, so~$\PSL_2(\Z)$ acts faithfully on~$\PM^1(\QM)$.

The following lemma is trivial, but it allows to stress the role
of~$S_q$ and~$T_q$.

\begin{lemm}
  Let 
\[S_q=\begin{pmatrix}0&-1\\q&0\end{pmatrix}\quad\text{and}\quad
T_q=\begin{pmatrix}q&1\\0&1\end{pmatrix}.\]

We have $S_q^2=-q\If_2$ and $(S_qT_q)^3=-q^3\If_2$.
\end{lemm}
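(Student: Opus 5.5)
This is a direct matrix computation, so the plan is to multiply out the relevant products explicitly.

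First, for $S_q^2$, multiply $S_q$ by itself:
\[
S_q^2=\begin{pmatrix}0&-1\\q&0\end{pmatrix}\begin{pmatrix}0&-1\\q&0\end{pmatrix}=\begin{pmatrix}-q&0\\0&-q\end{pmatrix}=-q\If_2 .
\]

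Second, for $(S_qT_q)^3$, compute the product $S_qT_q$ once:
\[
S_qT_q=\begin{pmatrix}0&-1\\q&0\end{pmatrix}\begin{pmatrix}q&1\\0&1\end{pmatrix}=\begin{pmatrix}0&-1\\q^2&q\end{pmatrix}.
\]
Call this matrix $M$. Rather than cubing $M$ entry by entry, use Cayley--Hamilton. We have $\operatorname{tr}M=q$ and $\det M=q^2$, so $M^2=qM-q^2\If_2$. Multiplying by $M$ gives
\[
M^3=qM^2-q^2M=q(qM-q^2\If_2)-q^2M=-q^3\If_2 .
\]
This is exactly the claimed relation.

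There is no real obstacle here. The only point needing care is the matrix product $S_qT_q$ itself, since a sign or a misplaced power of $q$ there would propagate through the rest. As a sanity check, I would confirm by direct multiplication that
\[
M^2=\begin{pmatrix}-q^2&-q\\q^3&0\end{pmatrix},
\]
which agrees with $qM-q^2\If_2$. Setting $q=1$ also recovers the classical relations $S^2=(ST)^3=-\If_2$, as expected.
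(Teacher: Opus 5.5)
Your proof is correct and takes the same approach as the paper, which calls the lemma trivial and leaves it as a direct matrix verification. Applying Cayley--Hamilton to $M=S_qT_q$ (trace $q$, determinant $q^2$) to get $M^3=-q^3\If_2$ is simply a shortcut for multiplying out the cube.
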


Since $\det S_q=\det T_q=q$, the matrices $S_q$ and $T_q$ live in the group $\GL_2(\ZM[q,q^{-1}])$ of invertible~$2\times2$ matrices over the ring of Laurent polynomials. The center of this group is
\[Z=\bigl\{\eps q^k\If_2\,:\ \eps\in\{-1,1\},\ k\in\ZM\bigr\},\]
and we denote by $\PGL_2(\ZM[q,q^{-1}])$ the quotient group~$\GL_2(\ZM[q,q^{-1}])/Z$, which can be identified with the group of homographies of~$\QM(q)\cup\{\infty\}$. 

\begin{coro}\label{q-action}
  \begin{enumerate}[{\em(i)}]
  \item \label{q-action-i}There is a unique morphism $\SL_2(\ZM)\to\PGL_2(\ZM[q,q^{-1}])$, $A\mapsto [A]_q$, such that $[S]_q=S_q$ and $[T]_q=T_q$. This gives rise to a well-defined action of~$\SL_2(\ZM)$ on $\QM(q)\cup\{\infty\}$, denoted $A\cdotq F$ for~$A\in\SL_2(\ZM)$ and~$F\in\QM(q)\cup\{\infty\}$, such that
    \[S\cdotq F=[S]_q\cdot F=-\frac{1}{qF}\quad\text{and}\quad T\cdotq F=[T]_q\cdot F=qF+1.\]
  \item \label{q-action-ii}The stabilizer of~$\infty$ in $\SL_2(\ZM)$ is the subgroup generated by~$T$ and~$S^2=-\If_2$. With respect to the action of assertion~\ref{q-action-i}, this subgroup fixes both~$\infty$ and~$1/(1-q)$.
  \end{enumerate}
\end{coro}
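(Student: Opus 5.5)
For (i), the plan is to use the presentation of $\SL_2(\ZM)$ by generators $S,T$. The classical presentation is $\langle S,T \mid S^4=1,\ S^2=(ST)^3\rangle$; equivalently, $\SL_2(\ZM)$ is the amalgamated product $\Z/4*_{\Z/2}\Z/6$. Granting this presentation, a morphism from $\SL_2(\ZM)$ to any group $G$ is the same as a choice of images $\sigma,\tau$ satisfying $\sigma^4=1$ and $\sigma^2=(\sigma\tau)^3$. Take $G=\PGL_2(\ZM[q,q^{-1}])$, and let $\sigma,\tau$ be the classes of $S_q,T_q$. By the lemma, $S_q^2=-q\If_2$ and $(S_qT_q)^3=-q^3\If_2$, and both of these lie in the centre $Z$. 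Hence $\sigma^2=1=(\sigma\tau)^3$ in the quotient, and in particular $\sigma^4=1$ and $\sigma^2=(\sigma\tau)^3$. So the morphism exists. It is unique because $S,T$ generate $\SL_2(\ZM)$.

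The action on $\QM(q)\cup\{\infty\}$ is obtained by composing this morphism with the faithful action of $\PGL_2(\ZM[q,q^{-1}])$ by homographies. The scalar ambiguity is harmless, since central matrices act trivially. The two displayed formulas are then direct evaluations:
\[S_q\cdot F=\frac{-1}{qF},\qquad T_q\cdot F=\frac{qF+1}{1}.\]

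The main point needing care is the presentation of $\SL_2(\ZM)$. The paper only recalls that $\PSL_2(\ZM)$ is presented by $s^2=(st)^3=1$. I would avoid the full $\SL_2$ presentation as follows. Since the target is $\PGL_2$ and $[S^2]_q$ is trivial there, the morphism should factor through $\PSL_2(\ZM)$. So I would define a morphism $\PSL_2(\ZM)\to\PGL_2(\ZM[q,q^{-1}])$ by $s\mapsto\sigma$, $t\mapsto\tau$; this is legitimate exactly because $\sigma^2=(\sigma\tau)^3=1$. Precomposing with the projection $\SL_2(\ZM)\to\PSL_2(\ZM)$ then gives the required morphism, and uniqueness again follows from generation. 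This settles existence without any further relations.

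For (ii), I first compute the stabilizer of $\infty$ under the classical action. A matrix $\begin{pmatrix}a&b\\c&d\end{pmatrix}$ fixes $\infty$ iff $c=0$. Then $ad=1$, so $a=d=\pm1$, and the matrix is $\pm T^b$. Thus the stabilizer is generated by $T$ and $S^2=-\If_2$.

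Under the $q$-action, $[S^2]_q=S_q^2$ is central and so acts trivially. For $T$, the map $F\mapsto qF+1$ sends $\infty\mapsto\infty$. Its finite fixed point solves $F=qF+1$, giving $F=1/(1-q)$. Since every element of the subgroup is a product of $T^{\pm1}$ and $S^2$, the whole subgroup fixes both points.
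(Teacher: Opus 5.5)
Your proof is correct and is the argument the paper intends. The paper gives no separate proof: it presents the corollary as an immediate consequence of the lemma $S_q^2=-q\If_2$, $(S_qT_q)^3=-q^3\If_2$ together with the recalled presentation $s^2=(st)^3=1$ of $\PSL_2(\ZM)$, which is exactly your factorisation through $\PSL_2(\ZM)$, and (ii) is the same direct check that $[-\If_2]_q=-q\If_2$ is central and that $F\mapsto qF+1$ fixes $\infty$ and $1/(1-q)$. One cosmetic slip: when $a=d=-1$ the stabilizing matrix is $-T^{-b}$ rather than $-T^{b}$, which does not affect the conclusion.
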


Thanks to the corollary, the following definition makes sense.

\begin{defi}
  Let $m$, $n$ be relatively prime integers (so~$n\ne0$). Let~$(u,v)\in\ZM^2$ such that $um+vn=1$. The left and right $q$-deformations of the rational~$m/n$ are respectively defined by
  \[\left[\frac{m}{n}\right]_q^\flat=\begin{pmatrix}m&-v\\n&u\end{pmatrix}\cdotq\frac{1}{1-q}\quad\text{and}\quad \left[\frac{m}{n}\right]_q^\sharp=\begin{pmatrix}m&-v\\n&u\end{pmatrix}\cdotq\infty.\]
  
  This makes sense thanks to assertion~{\em\ref{q-action-ii}} of proposition~\ref{q-action}, since the matrix~$\left(\begin{smallmatrix}m&-v\\n&u\end{smallmatrix}\right)$ is unique up to a sign ($S^2=-\If_2$) and right multiplication by a power of~$T$, and the matrices~$S^2$ and~$T$ fix both~$1/(1-q)$ and~$\infty$.
\end{defi}

\begin{exas}
  For $n=1$, one can take $v=1$ and $u=0$. Since $\left(\begin{smallmatrix}m&-1\\1&0\end{smallmatrix}\right)=T^mS$, an easy induction shows that the $q$-deformed integers are respectively
  \[[m]_q^\flat=q^m+q^{m-2}+q^{m-3}\cdots+q+1\quad\text{and}\quad [m]_q^\sharp=q^{m-1}+q^{m-2}+\cdots+q+1.\]
  Fortunately, this is coherent with \cite[example 1.2 (a)]{MG-O_1} and~\cite[\S~2.3]{BBL}. For the sake of simplicity, we will abbreviate $[m]^\sharp_q=[m]_q$.
\end{exas}

\section{Integer matrix computations}

For $a,c\in\ZM$, let~$A_a$ and~$C_c$ be the following matrices:
\[  A_a=\begin{pmatrix}a&1\\1&0\end{pmatrix}\quad\text{and}\quad
    C_c=\begin{pmatrix}c&-1\\1&0\end{pmatrix}.\]

These matrices encode the key operation in continued fractions, both
classical and \emph{à la Hirzebruch} (following~\cite[p.~6]{MG-O_2}): for~$z\in\QM\cup\{\infty\}$,
\[
  a+\frac1z=\frac{az+1}{1z+0}=A_a\cdot z\quad\text{and}\quad
  c-\frac1z=\frac{cz-1}{1z+0}=C_c\cdot z.\]

\begin{nota}
  Let $(a_1,\dots,a_k)$ (with $a_i\ge1$ for $i\ge2$) and $(c_1,\dots,c_l)$ (with $c_j\ge2$ for $j\ge2$) be two finite sequences of integers. One denotes  the following continued fraction expansions:
  \begin{align*}
    [a_1,\dots,a_k]&=A_{a_1}A_{a_2}\cdots A_{a_k}\cdot\infty=a_1+\dfrac{1}{a_2+\dfrac{1}{\ddots+\dfrac{1}{a_{k}}}};\\
    \llb c_1,\dots,c_l\rrb&=C_{c_1}C_{c_2}\cdots C_{c_l}\cdot\infty=c_1-\dfrac{1}{c_2-\dfrac{1}{\ddots-\dfrac{1}{c_{l}}}}.
  \end{align*}
  
  Accordingly, when $k=2p$ is even, we will denote by $[a_1,\dots,a_{2p}]_q^\flat$ and $[a_1,\dots,a_{2p}]_q^\sharp$ the~$q$-deformations of the rational number~$[a_1,\dots,a_{2p}]$ (this makes sense, since $\det(A_{a_1}\cdots A_{a_k})=1$).
\end{nota}

The next lemma follows from a straightforward induction.

\begin{lemm}
  One has
  \begin{align*}
    T^a
    &=\begin{pmatrix}1&a\\0&1\end{pmatrix}&&(a\in\Z);\\
    C_2
    &=T^2S=\begin{pmatrix}2&-1\\1&0\end{pmatrix};\\
    C_2^c
    &=\begin{pmatrix}c+1&-c\\c&-c+1\end{pmatrix}&&(c\in\N,\ c\ge1).
  \end{align*}
\end{lemm}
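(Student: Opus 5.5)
The three identities are proved separately, each by direct matrix multiplication, with an induction on the exponent wherever a power appears.

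For the first formula, $T^a=\left(\begin{smallmatrix}1&a\\0&1\end{smallmatrix}\right)$, I will first treat $a\in\N$ by induction on $a$. The case $a=0$ is $\If_2$. For the inductive step,
\[\begin{pmatrix}1&a\\0&1\end{pmatrix}\begin{pmatrix}1&1\\0&1\end{pmatrix}=\begin{pmatrix}1&a+1\\0&1\end{pmatrix}.\]
Since $a\mapsto\left(\begin{smallmatrix}1&a\\0&1\end{smallmatrix}\right)$ is a morphism from $(\Z,+)$, we have $\left(\begin{smallmatrix}1&a\\0&1\end{smallmatrix}\right)\left(\begin{smallmatrix}1&-a\\0&1\end{smallmatrix}\right)=\If_2$. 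This shows $T^{-a}=\left(\begin{smallmatrix}1&-a\\0&1\end{smallmatrix}\right)$, which covers negative exponents.

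For the second formula, I will compute directly:
\[T^2S=\begin{pmatrix}1&2\\0&1\end{pmatrix}\begin{pmatrix}0&-1\\1&0\end{pmatrix}=\begin{pmatrix}2&-1\\1&0\end{pmatrix}.\]
The right-hand side is $C_2$ by the definition of $C_c$ with $c=2$.

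For the third formula I will argue by induction on $c\ge1$.
\begin{itemize}
\item For $c=1$, the claimed matrix is $\left(\begin{smallmatrix}2&-1\\1&0\end{smallmatrix}\right)=C_2$.
\item For the inductive step, assume the formula for $c$ and multiply on the right by $C_2$:
\[\begin{pmatrix}c+1&-c\\c&-c+1\end{pmatrix}\begin{pmatrix}2&-1\\1&0\end{pmatrix}=\begin{pmatrix}2c+2-c&-(c+1)\\2c-c+1&-c\end{pmatrix}=\begin{pmatrix}c+2&-(c+1)\\c+1&-c\end{pmatrix}.\]
This is the formula at $c+1$.
\end{itemize}
The formula also holds at $c=0$, where it gives $\If_2$, but the statement only asks for $c\ge1$.

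There is no real obstacle here; the only point needing care is extending the first formula to negative exponents, which the inverse argument above handles.
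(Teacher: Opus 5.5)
Your proof is correct and takes the same route as the paper, which only says the lemma ``follows from a straightforward induction''; your explicit products for $T^a$, for $T^2S$, and for the inductive step $C_2^{c}C_2=C_2^{c+1}$ all check out. One phrasing could be tightened: in the negative-exponent case, appealing to the morphism property of $a\mapsto\left(\begin{smallmatrix}1&a\\0&1\end{smallmatrix}\right)$ assumes what you are proving, and it is enough to cite the one-line product $\left(\begin{smallmatrix}1&a\\0&1\end{smallmatrix}\right)\left(\begin{smallmatrix}1&-a\\0&1\end{smallmatrix}\right)=\If_2$, which is what you actually use.
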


\begin{rema}
  The matrices~$A_a$ have determinant~$-1$, so they are not subject to corollary~\ref{q-action}, but the product~$A_{a_1}A_{a_2}$ of two such matrices \emph{is} in~$\SL_2(\Z)$. To guess an expression in terms of~$S$ and~$T$, one computes
  \[A_{a_1}A_{a_2}=\begin{pmatrix}a_1&1\\1&0\end{pmatrix}\begin{pmatrix}a_2&1\\1&0\end{pmatrix}
  =\begin{pmatrix}a_1a_2+1&a_1\\a_2&1\end{pmatrix},\]
  which leads to find a continued fraction à la Hirzebruch
  for~$\frac{a_1a_2+1}{a_2}$:
  \[\frac{a_1a_2+1}{a_2}=a_1+\frac1{a_2}=\underbrace{a_1+1}_c-\frac{1}{\;\frac{a_2}{a_2-1}\;}
  =a_1+1-\frac{1}{2-\frac{a_2-1}{a_2-2}}
  =a_1+1-\frac{1}{2-\frac{1}{2-\frac{a_2-2}{a_2-3}}},\quad\text{etc.}\]

  This computation gives~$A_{a_1}A_{a_2}$ up to an element of the stabilizer of~$\infty$ (more or less a power of~$T$). It turns out that the element is~$T$.
\end{rema}

\begin{lemm}\label{A1A2}
  For nonnegative integers $a_1$, $a_2$, one has
  \[A_{a_1}A_{a_2}=T^{a_1+1}S(T^2S)^{a_2-1}T.\]
\end{lemm}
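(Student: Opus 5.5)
The plan is a direct matrix computation. I factor the right-hand side into three blocks, evaluate each block with the lemma giving $T^a$, $C_2=T^2S$ and $C_2^c$, and multiply out. Since $C_2=T^2S$, the middle factor $(T^2S)^{a_2-1}$ is $C_2^{a_2-1}$, so the identity to prove becomes
\[A_{a_1}A_{a_2}=\bigl(T^{a_1+1}S\bigr)\,C_2^{\,a_2-1}\,T .\]

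First, the left block is
\[T^{a_1+1}S=\begin{pmatrix}1&a_1+1\\0&1\end{pmatrix}\begin{pmatrix}0&-1\\1&0\end{pmatrix}=\begin{pmatrix}a_1+1&-1\\1&0\end{pmatrix}=C_{a_1+1}.\]
This matches the Hirzebruch expansion in the preceding remark, whose first term is $c=a_1+1$.

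Second, I handle the power of $C_2$. The lemma gives $C_2^c=\left(\begin{smallmatrix}c+1&-c\\c&1-c\end{smallmatrix}\right)$ only for $c\ge1$, that is, for $a_2\ge2$. The cases $a_2=1$ and $a_2=0$ need $c=0$ and $c=-1$. I will check that the same closed formula still holds there:
\begin{itemize}
\item for $c=0$ it gives $\If_2$;
\item for $c=-1$ it gives $\left(\begin{smallmatrix}0&1\\-1&2\end{smallmatrix}\right)$, and a one-line product with $C_2$ shows this is $C_2^{-1}$.
\end{itemize}
This extension to $c\in\{-1,0\}$ is the only point needing care, since $a_2=0$ is allowed by the hypothesis. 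With $c=a_2-1$ we then get, for all $a_2\ge0$,
\[C_2^{\,a_2-1}T=\begin{pmatrix}a_2&1-a_2\\a_2-1&2-a_2\end{pmatrix}\begin{pmatrix}1&1\\0&1\end{pmatrix}=\begin{pmatrix}a_2&1\\a_2-1&1\end{pmatrix}.\]

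Finally, multiplying the two blocks gives
\[C_{a_1+1}\begin{pmatrix}a_2&1\\a_2-1&1\end{pmatrix}=\begin{pmatrix}(a_1+1)a_2-(a_2-1)&a_1\\a_2&1\end{pmatrix}=\begin{pmatrix}a_1a_2+1&a_1\\a_2&1\end{pmatrix}.\]
This is exactly the product $A_{a_1}A_{a_2}$ computed in the remark, which proves the lemma.
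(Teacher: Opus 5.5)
Your proof is correct and follows the paper's route: rewrite the right-hand side as $C_{a_1+1}C_2^{a_2-1}T$ and multiply out using the closed form of $C_2^c$. Your extra check that this closed form also holds for $c=0$ and $c=-1$ is a welcome bit of care. It is needed for $a_2\in\{0,1\}$, which the statement allows but the paper's one-line verification passes over silently.
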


\begin{proof}
  This is a straightforward verification:
\[T^{a_1+1}S(T^2S)^{a_2-1}T=C_{a_1+1}C_2^{a_2-1}T
    =\begin{pmatrix}a_1a_2+a_2-a_2+1&a_1+1-1\\a_2&1\end{pmatrix}
    =A_{a_1}A_{a_2}.\qedhere
    \]
\end{proof}

\section{Deformed matrix computations}

\begin{lemm}\label{powers_TC}
  \begin{enumerate}[{\em(i)}]
  \item Let $a\in\NM$. One has
    \[T_q^a=\begin{pmatrix}q^a&[a]_q\\0&1\end{pmatrix}.\]
  \item One has:
    \[ T_q^2S_q=q\begin{pmatrix}q+1&-q\\1&0\end{pmatrix}\]
    and, more generally, for $c\in\N$, $c\ge1$:
    \[(T_q^2S_q)^c=q^c\begin{pmatrix}[c+1]_q&-q[c]_q\\ [ c ]_q&-q[c-1]_q\end{pmatrix}.\]
  \end{enumerate}
\end{lemm}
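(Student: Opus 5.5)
Both parts go by induction on the exponent, using only matrix multiplication and the elementary identities for $q$-integers $[a]_q=1+q+\dots+q^{a-1}$. The two identities I need are $q[a]_q+1=[a+1]_q$ and $q^a+[a]_q\cdot 1\cdot$ (trivially) in the upper-left entries. Here $[0]_q=0$ and $[1]_q=1$.

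For (i), the case $a=0$ is $T_q^0=\If_2$, consistent with $[0]_q=0$. For the inductive step I will compute $T_q^{a+1}=T_qT_q^a$:
\[
\begin{pmatrix}q&1\\0&1\end{pmatrix}\begin{pmatrix}q^a&[a]_q\\0&1\end{pmatrix}
=\begin{pmatrix}q^{a+1}&q[a]_q+1\\0&1\end{pmatrix}.
\]
Since $q[a]_q+1=[a+1]_q$, this gives the claim.

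For (ii), I first compute $T_q^2$ from (i), namely $\begin{pmatrix}q^2&q+1\\0&1\end{pmatrix}$. Multiplying on the right by $S_q$ gives
\[
T_q^2S_q=\begin{pmatrix}q(q+1)&-q^2\\q&0\end{pmatrix}=q\begin{pmatrix}q+1&-q\\1&0\end{pmatrix}.
\]
This is also the case $c=1$ of the general formula, because $[2]_q=q+1$, $[1]_q=1$ and $[0]_q=0$.

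For the induction, I will compute $(T_q^2S_q)^{c+1}=(T_q^2S_q)^c\,(T_q^2S_q)$. This equals $q^{c+1}$ times the product
\[
\begin{pmatrix}[c+1]_q&-q[c]_q\\ [c]_q&-q[c-1]_q\end{pmatrix}\begin{pmatrix}q+1&-q\\1&0\end{pmatrix}.
\]
The entries of this product are:
\begin{itemize}
\item the first row is $\bigl((q+1)[c+1]_q-q[c]_q,\ -q[c+1]_q\bigr)$;
\item the second row is $\bigl((q+1)[c]_q-q[c-1]_q,\ -q[c]_q\bigr)$.
\end{itemize}

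The right-hand column already has the required form. For the left-hand column, the only real content is the three-term recurrence
\[
(q+1)[k]_q-q[k-1]_q=[k+1]_q\quad(k\ge1).
\]
I will verify it by writing $(q+1)[k]_q-q[k-1]_q=[k]_q+q\bigl([k]_q-[k-1]_q\bigr)=[k]_q+q\cdot q^{k-1}=[k+1]_q$. Applying it with $k=c+1$ and $k=c$ (the latter using $[0]_q=0$ when $c=1$) completes the induction.

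There is no genuine obstacle here. The only care needed is with the scalar factor $q^c$ and the sign of the $(1,2)$ and $(2,2)$ entries, which is why I will factor $q$ out of $T_q^2S_q$ before multiplying.
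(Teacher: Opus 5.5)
Your proof is correct and is essentially the paper's argument. For (i) you both induct via $T_q^{a+1}=T_qT_q^a$ and $q[a]_q+1=[a+1]_q$, and for (ii) you both induct on $c$ with one more multiplication by $T_q^2S_q$. The differences are cosmetic: you multiply on the right, so the right column is immediate and the left column uses $(q+1)[k]_q-q[k-1]_q=[k+1]_q$, whereas the paper multiplies on the left and uses $[c+1]_q-q[c]_q=1$ on the top row; and you start (i) at $a=0$, a case the paper's base case $a=1$ leaves implicit.
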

\begin{proof}
  (i) There is nothing to show for $a=1$ (recall that $[1]_q=1$). If the
  relation is verified for some integer~$a$, then
  \[T_q^{a+1}=\begin{pmatrix}q&1\\0&1\end{pmatrix}\begin{pmatrix}q^{a+1}&q[a]_q+1\\0&1\end{pmatrix}=\begin{pmatrix}q^{a+1}&[a+1]_q\\0&1\end{pmatrix}.\]

  (ii) First of all, thanks to the previous computation of~$T_q^2$, one has:
  \[T_q^2S_q=\begin{pmatrix}q^2&q+1\\0&1\end{pmatrix}\begin{pmatrix}0&-1\\q&0\end{pmatrix}
    =\begin{pmatrix}q(q+1)&-q^2\\q&0\end{pmatrix},\]
  which is the case $c=1$. If the relation is verified for some integer~$c$, then
  \begin{align*}(T_q^2S_q)^c
    &=q\begin{pmatrix}q+1&-q\\1&0\end{pmatrix}q^c\begin{pmatrix}[c+1]_q&-q[c]_q\\ [ c ]_q&-q[c-1]_q\end{pmatrix}\\
    &=q^{c+1}\begin{pmatrix}q[c+1]_q+[c+1]_q-q[c]_q&-q^2[c]_q-q[c]_q+q^2[c-1]_q\\ [c+1]_q&-q[c]_q\end{pmatrix},
  \end{align*}
  and one can conclude by using $[c+1]_q-q[c]_q=1$ and
  $q[c+1]_q+1=[c+2]_q$ for the first coefficient, and, similarly,
  $-q[c]_q+q^2[c-1]_q=-q\bigl([c]_q-q[c-1]_q\bigr)=-q$ and
  $-q^2[c]_q-q=-q\bigl(q[c]_q+1\bigr)=-q[c+1]_q$ for the second one.
\end{proof}

\begin{prop}\label{A1A2q}
  Let~$a_1$ and~$a_2$ be positive integers. Up to a sign and a power of~$q$, one has
  \[[A_1A_2]_q=T_q^{a_1+1}S_q(T_q^2S_q)^{a_2-1}T_q=
    \begin{pmatrix}q[a_1][a_2]+q^{a_1+a_2}&[a_1]\\q[a_2]&1\end{pmatrix}.
  \]  
\end{prop}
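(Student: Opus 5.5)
The first equality is essentially a definition. Lemma~\ref{A1A2} gives the factorisation $A_{a_1}A_{a_2}=T^{a_1+1}S(T^2S)^{a_2-1}T$ in $\SL_2(\ZM)$. Corollary~\ref{q-action} says that $A\mapsto[A]_q$ is a morphism sending $S\mapsto S_q$ and $T\mapsto T_q$, and $[\,\cdot\,]_q$ is only defined modulo the centre~$Z$. Hence
\[[A_{a_1}A_{a_2}]_q=T_q^{a_1+1}S_q(T_q^2S_q)^{a_2-1}T_q\]
up to a sign and a power of~$q$. All the work is therefore in computing this product explicitly. Throughout, I use the brackets $[k]=[k]_q$ and discard scalar factors $\pm q^k$ whenever convenient.

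I would multiply from the left, using lemma~\ref{powers_TC} for each factor.

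First, compute $T_q^{a_1+1}S_q$. Part~(i) of lemma~\ref{powers_TC} gives $T_q^{a_1+1}$, and then
\[T_q^{a_1+1}S_q=\begin{pmatrix}q[a_1+1]&-q^{a_1+1}\\ q&0\end{pmatrix}=q\begin{pmatrix}[a_1+1]&-q^{a_1}\\ 1&0\end{pmatrix}.\]

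Next, multiply by $(T_q^2S_q)^{c}$ with $c=a_2-1$, using part~(ii) of the lemma and dropping the factor $q^c$. The rows $([a_1+1],-q^{a_1})$ and $(1,0)$ times the matrix with rows $([c+1],-q[c])$ and $([c],-q[c-1])$ give:
\begin{itemize}
\item first row: $\bigl([a_1+1][c+1]-q^{a_1}[c],\ -q[a_1+1][c]+q^{a_1+1}[c-1]\bigr)$;
\item second row: $\bigl([c+1],\ -q[c]\bigr)$.
\end{itemize}
The case $a_2=1$, i.e.\ $c=0$, is covered by reading $(T_q^2S_q)^0=\If_2$. It matches the formula with the conventions $[0]=0$ and $[-1]=-q^{-1}$, so I would either adopt these conventions or treat it separately.

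Then, multiply on the right by $T_q$. This sends a row $(x,y)$ to $(qx,\,x+y)$, which gives:
\begin{itemize}
\item second row: $\bigl(q[c+1],\ [c+1]-q[c]\bigr)=\bigl(q[a_2],1\bigr)$, using $[c+1]-q[c]=1$;
\item first row: first entry $q\bigl([a_1+1][a_2]-q^{a_1}[a_2-1]\bigr)$, second entry $[a_1+1][a_2]-q^{a_1}[a_2-1]-q[a_1+1][a_2-1]+q^{a_1+1}[a_2-2]$.
\end{itemize}

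The main obstacle is simplifying the first row with the $q$-integer identities $[k+1]=q[k]+1=[k]+q^k$.

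For the $(1,1)$ entry, write $[a_1+1]=[a_1]+q^{a_1}$. The bracket becomes $[a_1][a_2]+q^{a_1}\bigl([a_2]-[a_2-1]\bigr)=[a_1][a_2]+q^{a_1+a_2-1}$. Multiplying by~$q$ gives $q[a_1][a_2]+q^{a_1+a_2}$, as claimed.

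For the $(1,2)$ entry, group it as
\[[a_1+1]\bigl([a_2]-q[a_2-1]\bigr)-q^{a_1}\bigl([a_2-1]-q[a_2-2]\bigr)=[a_1+1]-q^{a_1}=[a_1],\]
since each inner bracket equals~$1$.

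The resulting matrix is exactly the one in the statement, up to the dropped sign and power of~$q$.
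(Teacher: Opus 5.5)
Your proposal is correct and follows essentially the same route as the paper: the first equality comes from the factorisation $A_{a_1}A_{a_2}=T^{a_1+1}S(T^2S)^{a_2-1}T$ and the morphism property modulo the centre, and the second is a direct multiplication using the formulas for $T_q^a$ and $(T_q^2S_q)^c$ together with $[k+1]=q[k]+1=[k]+q^k$. The only differences are cosmetic. You multiply from the left, whereas the paper first absorbs $T_q$ into $(T_q^2S_q)^{a_2-1}$ to get $\begin{pmatrix}q[a_2]&1\\q[a_2-1]&1\end{pmatrix}$ and then multiplies on the left. You also explicitly justify the case $a_2=1$ through the conventions $[0]=0$, $[-1]=-q^{-1}$, which the paper uses silently.
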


\begin{proof}
  The first equality (up to~$\pm q^k$) comes from lemma~\ref{A1A2}, given that the map $A\mapsto[A]_q$ is the morphism such that $[S]_q=S_q$ and $[T]_q=T_q$.
  The second equality is a somewhat tedious, but straightforward verification: up to~$\pm q^k$,
  \begin{align*}B
    &:=T_q^{a_1+1}S_q(T_q^2S_q)^{a_2-1}T_q\\
    &\hphantom{:}=\begin{pmatrix}q^{a_1+1}&[a_1+1]\\0&1\end{pmatrix}\begin{pmatrix}0&-1\\q&0\end{pmatrix}
      \begin{pmatrix}[a_2]&-q[a_2-1]\\ [a_2-1]&-q[a_2-2]\end{pmatrix}\begin{pmatrix}q&1\\0&1\end{pmatrix}\\
    &\hphantom{:}=
      \begin{pmatrix}q^{a_1+1}&[a_1+1]\\0&1\end{pmatrix}\begin{pmatrix}0&-1\\q&0\end{pmatrix}\begin{pmatrix}q[a_2]&1\\q[a_2-1]&1\end{pmatrix}\\
    &\hphantom{:}=
      \begin{pmatrix}q^{a_1+1}&[a_1+1]\\0&1\end{pmatrix}\begin{pmatrix}-q[a_2-1]&-1\\q^2[a_2]&q\end{pmatrix}\\
    &\hphantom{:}=
      \begin{pmatrix}-q^{a_1+2}[a_2-1]+q^2[a_1+1][a_2]&-q^{a_1+1}+q[a_1+1]\\q^2[a_2]&q\end{pmatrix}\\
    &=\hphantom{:}
      \begin{pmatrix}-q^{a_1+1}\bigl([a_2]-q^{a_2-1}\bigr)+q\bigl([a_1]+q^{a_1}\bigr)[a_2]&[a_1+1]-q^{a_1+1}\\q[a_2]&1\end{pmatrix}\\
    &=\hphantom{:}
      \begin{pmatrix}q[a_1][a_2]+q^{a_1+a_2}&[a_1]\\q[a_2]&1\end{pmatrix}.\qedhere
  \end{align*}
\end{proof}

One can recover the expression of the right $q$-deformed rational~$[m/n]_q^\sharp$ given in~\cite[exercise~4]{MG-O_2}.

\begin{coro}
  Let $(a_1,\dots,a_{2p})$ be an even length tuple of positive
  integers. The right $q$-deformed rational associated
  to~$\frac{m}{n}=[a_1,\dots,a_{2p}]$ is
  \[ \left[\frac{m}{n}\right]_q^\sharp=[a_1]_q+\frac{q^{a_1}}{[a_2]_{q^{-1}}+\frac{q^{-a_2}}{[a_3]_q+\frac{q^{a_3}}{[a_4]_{q^{-1}}+\frac{q^{-a_4}}{\frac{\ddots}{[a_{2p-1}]_q+\frac{q^{a_{2p-1}}}{[a_{2p}]_{q^{-1}}}}}}}}.\]
\end{coro}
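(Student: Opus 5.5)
We prove the corollary by induction on~$p$, reading the $q$-deformed matrix of each pair $A_{a_{2i-1}}A_{a_{2i}}$ as a homography. Since $\det(A_{a_1}\cdots A_{a_{2p}})=1$, we may write $A_{a_1}\cdots A_{a_{2p}}=M_1M_2\cdots M_p$ with $M_i=A_{a_{2i-1}}A_{a_{2i}}\in\SL_2(\ZM)$. Because $A\mapsto[A]_q$ is a morphism into $\PGL_2(\ZM[q,q^{-1}])$, we get
\[\left[\frac mn\right]_q^\sharp=[M_1]_q\cdot[M_2]_q\cdots[M_p]_q\cdot\infty .\]
The first column of $A_{a_1}\cdots A_{a_{2p}}$ is~$(m,n)$ up to sign, so this matrix may serve as the matrix $\left(\begin{smallmatrix}m&-v\\n&u\end{smallmatrix}\right)$ of the definition; the ambiguity is harmless by assertion~\ref{q-action-ii} of corollary~\ref{q-action}. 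The scalars $\pm q^k$ in proposition~\ref{A1A2q} are also irrelevant, since we work with homographies.

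The key step is to rewrite the homography given by proposition~\ref{A1A2q}. For $F\in\QM(q)\cup\{\infty\}$,
\[[A_{a_1}A_{a_2}]_q\cdot F=\frac{(q[a_1][a_2]+q^{a_1+a_2})F+[a_1]}{q[a_2]F+1}
=[a_1]_q+\frac{q^{a_1+a_2}F}{q[a_2]_qF+1}.\]
This follows by subtracting $[a_1]$ times the denominator from the numerator. Dividing numerator and denominator by $q^{a_2}F$ turns the second term into
\[\frac{q^{a_1}}{q^{1-a_2}[a_2]_q+q^{-a_2}F^{-1}}.\]
We then use the identity $q^{1-a_2}[a_2]_q=q^{1-a_2}(1+q+\dots+q^{a_2-1})=[a_2]_{q^{-1}}$. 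This gives
\[[A_{a_1}A_{a_2}]_q\cdot F=[a_1]_q+\cfrac{q^{a_1}}{[a_2]_{q^{-1}}+\cfrac{q^{-a_2}}{F}}.\]
At $F=\infty$ the last fraction vanishes, which matches the conventions on~$\infty$.

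The induction now goes as follows. Set $F_{p+1}=\infty$ and $F_i=[M_i]_q\cdot F_{i+1}$. Applying the displayed formula with $(a_{2i-1},a_{2i})$ in place of $(a_1,a_2)$ shows that $F_i$ equals $[a_{2i-1}]_q+q^{a_{2i-1}}/([a_{2i}]_{q^{-1}}+q^{-a_{2i}}/F_{i+1})$. Unwinding from $i=p$ down to $i=1$ gives exactly the stated continued fraction for $F_1=[m/n]_q^\sharp$, with the innermost level being $[a_{2p-1}]_q+q^{a_{2p-1}}/[a_{2p}]_{q^{-1}}$.

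The only delicate point is bookkeeping. One must check that the product of the $M_i$ is a legitimate choice of matrix in the definition; this holds because the definition only requires first column $(m,n)$ up to the stabiliser of~$\infty$. One must also check that no intermediate $F_i$ makes a denominator vanish. This holds because the $F_i$ are nonzero rational functions: at $q=1$ they specialise to the positive rationals $[a_{2i-1},\dots,a_{2p}]$.
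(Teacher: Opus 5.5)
Your proof is correct and is essentially the paper's: the same rewriting $[A_{a_1}A_{a_2}]_q\cdot z=[a_1]_q+q^{a_1}/\bigl([a_2]_{q^{-1}}+q^{-a_2}/z\bigr)$, followed by an induction on $p$ via $[a_1,\dots,a_{2p}]^\sharp_q=[A_{a_1}A_{a_2}]_q\cdot[a_3,\dots,a_{2p}]^\sharp_q$, which is exactly your recursion $F_i=[M_i]_q\cdot F_{i+1}$. Your additional checks are sound and fill in details the paper leaves implicit: that the product of the $M_i$ is an admissible matrix in the definition, and that no denominator vanishes (via specialisation at $q=1$).
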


\begin{proof}
  First assume $p=1$. Since
  $a_1+\frac{1}{a_2}=\frac{a_1a_2+1}{a_2}=A_{a_1}A_{a_2}\cdot\infty$,
  one lets the matrix $[A_{a_1}A_{a_2}]_q$ computed in
  proposition~\ref{A1A2q} act on some~$z\in\Q(q)\cup\{\infty\}$:
  \[
    [A_{a_1}A_{a_2}]_q\cdot z=\frac{\bigl(q[a_1]_q[a_2]_q+q^{a_1+a_2}\bigr)z+[a_1]_q}{q[a_2]_qz+1}
    =[a_1]_q+\frac{q^{a_1}}{q^{-a_2+1}[a_2]_q+\dfrac{q^{-a_2}}z}
    =[a_1]_q+\frac{q^{a_1}}{[a_2]_{q^{-1}}+\dfrac{q^{-a_2}}z}.\]

  Taking $z=\infty$ gives the property for $p=1$. The general case follows by induction on~$p$. Indeed, if the result is true for $[a_3,\dots,a_{2p}]$, then it is also true for~$[a_1,\dots,a_{2p}]$, since
  \[[a_1,\dots,a_{2p}]=A_{a_1}A_{a_2}[a_3,\dots,a_{2p}],\
    \text{which implies}\ [a_1,\dots,a_{2p}]^\sharp_q=[A_{a_1}A_{a_2}]_q\cdot[a_3,\dots,a_{2p}]^\sharp_q.\qedhere\]
\end{proof}

\section{Weighted paths on snake graphs}

Let $a_1$, $a_2$ be positive integers. The \emph{hook} associated to~$(a_1,a_2)$ is the directed graph~$G(a_1,a_2)$ built by glueing~$a_1$ square boxes on a horizontal line, then a pile of~$a_2-1$ boxes above the rightmost box. The edges of the graph are the sides of the boxes, and they are directed toward the right or the top (figure~\ref{fig:snake}, left).

Let $(a_1,\dots,a_{2p})$ be an even-length sequence of positive integers. The corresponding snake graph, denoted by $G(a_1,\dots,a_{2p})$, is obtained by starting with the hook associated to~$(a_1,a_2)$, stacking a box on top of it, then glueing the hook associated to~$(a_3,a_4)$, and so forth (figure~\ref{fig:snake}, right). This graph is implicitly embedded in the plane, so that the coordinate of the bottom left corner is at the origin.

\newcommand{\snakeO}[2]{%
  \draw[>=latex,->] (#1,#2)--++(1,0);
  \draw[>=latex,->] (#1+1,#2)--++(0,1);
  \draw[>=latex,->] (#1,#2)--++(0,1);
  \draw[>=latex,->] (#1,#2+1)--++(1,0);
}
\newcommand{\snakeD}[2]{%
  \draw[>=latex,->] (#1,#2)--++(1,0);
  \draw[>=latex,->] (#1+1,#2)--++(0,1);
  \draw[>=latex,->] (#1,#2+1)--++(1,0);
}
\newcommand{\snakeN}[2]{%
  \draw[>=latex,->] (#1+1,#2)--++(0,1);
  \draw[>=latex,->] (#1,#2)--++(0,1);
  \draw[>=latex,->] (#1,#2+1)--++(1,0);
}
\newcommand{\snakeGa}[2]{%
  \fill[gray!20] (#1,#2) rectangle (#1+1,#2+1);
  \draw[>=latex,->] (#1,#2)--++(0,1);
  \draw[>=latex,->] (#1,#2+1)--++(1,0);
  \draw[>=latex,->] (#1,#2)--++(1,0);
}
\newcommand{\snakeGan}[2]{%
  \draw[>=latex,->] (#1,#2)--++(0,1);
  \draw[>=latex,->] (#1,#2+1)--++(1,0);
  \draw[>=latex,->] (#1,#2)--++(1,0);
}
\newcommand{\snakeHook}[4]{%
  \snakeO{#1}{#2}
  \foreach \d in {2,...,#3}{
    \snakeD{#1+\d-1}{#2}
  }
  \foreach \d in {2,...,#4}{
    \snakeN{#1+#3-1}{#2+\d-1}
  }
}

\begin{figure}[ht]
  \centering
  \begin{tikzpicture}[x=.6cm, y=.6cm]
    \snakeHook{0}{0}{5}{3}
      \draw[decorate, decoration={brace, mirror, amplitude=10pt}] (0,-0.4) -- (5,-0.4) node[midway, below=12pt] {$a_1$ boxes};
      \draw[decorate, decoration={brace, mirror, amplitude=10pt}] (5.4,0) -- (5.4,3) node[midway, right=12pt] {$a_2$ boxes};
    \begin{scope}[xshift=6cm]
      \snakeHook{0}{0}{5}{3}
      \snakeGa{4}{3}
      \snakeHook{5}{3}{4}{2}
      \snakeGa{8}{5}
      \snakeHook{9}{5}{6}{2}
      \draw[decorate, decoration={brace, mirror, amplitude=10pt}] (0,-0.4) -- (5,-0.4) node[midway, below=12pt] {$a_1$ boxes};
      \draw[decorate, decoration={brace, mirror, amplitude=10pt}] (5,3-0.4) -- (9,3-0.4) node[midway, below=12pt] {$a_3$};
      \draw[decorate, decoration={brace, mirror, amplitude=10pt}] (9,5-0.4) -- (15,5-0.4) node[midway, below=12pt] {$a_5$};
      \draw[decorate, decoration={brace, mirror, amplitude=10pt}] (9.4,0) -- (9.4,3) node[midway, right=12pt] {$a_2$ boxes};
      \draw[decorate, decoration={brace, mirror, amplitude=10pt}] (15.4,3) -- (15.4,5) node[midway, right=12pt] {$a_4$};
      \draw[decorate, decoration={brace, mirror, amplitude=10pt}] (15.4,5) -- (15.4,7) node[midway, right=12pt] {$\cdots$};
    \end{scope}
  \end{tikzpicture}
  \caption{The hook associated to $(5,3)$ and the snake graph associated to $(5,3,4,2,6,2)$}
  \label{fig:snake}
\end{figure}

Given two vertices~$\sg$ and~$\fg$, a \emph{path} from~$\sg$ (``start'') to~$\fg$ (``finish'') is a finite sequence of arrows, say $\ga=(\ga_1,\dots,\ga_l)$, such that the source of the first arrow~$\ga_1$ is~$\sg$, the target of~$\ga_l$ is~$\fg$, and the target of~$\ga_i$ is the source of~$\ga_{i+1}$.
The \emph{weight} of a path~$\ga$ is~$q^{\a(\ga)}$, where~$\a(\ga)$ is the area above or on the left of the path in the snake graph (a square counts for~$1$; see figure~\ref{fig:weight}). We denote by~$\Ga(\sg,\fg)$ the weighted sum of all paths from~$\sg$ to~$\fg$:
\[\Ga(\sg,\fg)=\sum_{\ga:\sg\to\fg}q^{\a(\ga)}.\]

\begin{figure}[ht!]
  \centering
  \begin{tikzpicture}[x=.6cm, y=.6cm]
    \fill[slateblue!20] (0,0) rectangle (3,1);
    \fill[slateblue!20] (4,1) rectangle (5,4);
    \fill[slateblue!20] (5,3) rectangle (8,4);
    \fill[slateblue!20] (8,5) rectangle (12,6);
    \fill[slateblue!20] (14,6) rectangle (15,7);
    \snakeHook{0}{0}{5}{3}
    \snakeGan{4}{3}
    \snakeHook{5}{3}{4}{2}
    \snakeGan{8}{5}
    \snakeHook{9}{5}{6}{2}
    \draw[very thick] (0,0)--++(3,0)--++(0,1)--++(2,0)--++(0,2)--++(3,0)--++(0,2)--++(4,0)--++(0,1)--++(3,0)--++(0,1);
    \fill (0,0) circle(2pt) node[left]{$\sg_1$};
    \fill (15,7) circle(2pt) node[right]{$\fg_1$};
    \fill (4,1) circle(2pt) node[above left]{$\sg_2$};
    \fill (14,7) circle(2pt) node[left]{$\fg_2$};
  \end{tikzpicture}
  \caption{A path of weight~$14$ from $\sg_1$ to $\fg_1$ on the snake graph associated to $(5,3,4,2,6,2)$}
  \label{fig:weight}
\end{figure}

On the snake graph~$G(a_1,\dots,a_{2p})$, we define four special vertices: two starting points,~$\sg_1=(0,0)$ and~$\sg_2=(a_1-1,1)$, and two finishing points,~$\fg_1=(a_1+a_3+\cdots+a_{2p-1},a_2+a_4+\cdots+a_{2p})$ and~$\fg_2=(a_1+a_3+\cdots+a_{2p-1}-1,a_2+a_4+\cdots+a_{2p})$ (figure~\ref{fig:weight}).

\begin{prop}\label{A1A2q_chemins}
  Let $a_1$, $a_2$ be positive integers. Let~$\sg_1$, $\sg_2$, $\fg_1$, $\fg_2$ be the special vertices on the snake graph~$G(a_1,a_2)$. Up to a sign and a power of~$q$, one has
  \[[A_{a_1}A_{a_2}]_q=\begin{pmatrix}q[a_1]_q[a_2]_q+q^{a_1+a_2}&[a_1]_q\\q[a_2]_q&1\end{pmatrix}=\begin{pmatrix}q\Ga(\sg_1,\fg_1)&\Ga(\sg_1,\fg_2)\\q\Ga(\sg_2,\fg_1)&\Ga(\sg_2,\fg_2)\end{pmatrix}.\]
\end{prop}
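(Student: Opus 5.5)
The first equality is exactly Proposition~\ref{A1A2q}, so it remains to compute the four path sums $\Ga(\sg_i,\fg_j)$ on the hook $G(a_1,a_2)$ and compare entries. First I fix coordinates. The hook consists of the boxes $[i-1,i]\times[0,1]$ for $1\le i\le a_1$, together with the boxes $[a_1-1,a_1]\times[j-1,j]$ for $2\le j\le a_2$. The special vertices are $\sg_1=(0,0)$, $\sg_2=(a_1-1,1)$, $\fg_1=(a_1,a_2)$ and $\fg_2=(a_1-1,a_2)$. Every path moves right or up, and its weight counts the boxes lying above or to the left of it.

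The easy entries come first. From $\sg_2=(a_1-1,1)$ to $\fg_2=(a_1-1,a_2)$ the only path goes straight up the left side of the vertical column. No box lies to its left inside the graph, so $\Ga(\sg_2,\fg_2)=1$. From $\sg_2$ to $\fg_1=(a_1,a_2)$, a path climbs the left side of the column to height $j\in\{1,\dots,a_2\}$, crosses the column's box at that height, and continues up the right side. The boxes to its left are the $j-1$ column boxes below the crossing, up to the normalization of the area convention. Summing over $j$ gives $[a_2]_q$, and the factor $q$ in the matrix accounts for the offset convention. I will check the precise convention against figure~\ref{fig:weight}; if the count there includes the crossed row, the sum is $q[a_2]_q$ directly. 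Similarly, a path from $\sg_1$ to $\fg_2$ runs along the bottom row to some abscissa $i\le a_1-1$, goes up one unit, and then runs along the top edge of the row to $(a_1-1,1)$ and up the column's left side. The area above it is $i$ boxes, counted in the same way. Summing over $i$ gives $[a_1]_q$, possibly after the same global shift.

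The main entry is $\Ga(\sg_1,\fg_1)$, which I split by where the path first leaves the bottom line $y=0$. In the first case it rises at $x=i$ with $0\le i\le a_1-1$, then reaches $(a_1-1,1)$ and, as in the $\sg_2\to\fg_1$ computation, crosses the column at some height. Its weight factors as $q^{(\text{row contribution})}$ times the column contribution, which gives $q[a_1]_q[a_2]_q$ up to normalization. In the second case it stays on $y=0$ all the way to $(a_1,0)$ and goes straight up the right edge. Then all $a_1$ row boxes and all $a_2-1$ column boxes lie above or to the left of it, for a weight of $q^{a_1+a_2-1}$, which the factor $q$ in the matrix turns into $q^{a_1+a_2}$.

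The main obstacle is bookkeeping the area convention so that all four entries are matched with the same global factor $\pm q^k$; this is allowed, since the statement holds only up to sign and a power of $q$. I will fix the convention by checking the example in figure~\ref{fig:weight} (weight $14$). I will then verify the decomposition $q[a_1][a_2]+q^{a_1+a_2}$ directly: the generic paths are indexed by pairs $(i,j)$, and the extra path is the one running along the extreme boundary. If a uniform shift appears, the entries are matched using the first-row/first-column factors $q$ displayed in the matrix.
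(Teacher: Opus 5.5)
Your argument is essentially the paper's: you enumerate paths in the hook directly and split the paths $\sg_1\to\fg_1$ into the single path along the bottom and right edges (weight $q^{a_1+a_2-1}$) and those through $\sg_2$, whose weights factor. Your split by where the path leaves $y=0$ is the same as the paper's split along the antidiagonal $x+y=a_1$, through $\tg=(a_1,0)$ or through $\sg_2$. The argument goes through, but two points need correcting.

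First, you misidentify which boxes are counted. Take a path $\sg_2\to\fg_1$ whose horizontal step is at height $j$. The boxes counted are the $a_2-j$ column boxes \emph{above} the crossing, since they lie to the left of the final run up $x=a_1$. The $j-1$ boxes below the crossing lie to the right of the initial climb along $x=a_1-1$ and are not counted. The weight-$14$ example in figure~\ref{fig:weight} shows this: that path crosses the first column at height $1$, and the column boxes above the crossing are the ones shaded. The sum is still $[a_2]_q$, because $a_2-j$ also ranges over $\{0,\dots,a_2-1\}$. For the same reason the product $[a_1]_q[a_2]_q$ in your case~1 is unaffected.

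Second, there is no ``offset convention'' to calibrate against the figure. With the paper's definition one gets exactly $\Ga(\sg_2,\fg_2)=1$, $\Ga(\sg_2,\fg_1)=[a_2]_q$, $\Ga(\sg_1,\fg_2)=[a_1]_q$ and $\Ga(\sg_1,\fg_1)=[a_1]_q[a_2]_q+q^{a_1+a_2-1}$. With these values the middle and right-hand matrices are equal on the nose. The factors $q$ in the first column are part of the statement, not a normalization. The phrase ``up to a sign and a power of $q$'' refers to a single scalar in the center $Z$, and it concerns only $[A_{a_1}A_{a_2}]_q$, which lives in $\PGL_2$. So it cannot absorb entry-wise shifts. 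In particular, your fallback ``the sum is $q[a_2]_q$ directly'' would put $q^2[a_2]_q$ in the $(2,1)$ entry against $q[a_2]_q$, while both $(2,2)$ entries are $1$. No global scalar could repair that. You should commit to the exact values above rather than leaving the normalization open.
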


\begin{proof}
  The first equality was proven in proposition~\ref{A1A2q}. We now compute the four weighted sums of paths on the right-hand side (see figure~\ref{fig:hook}).

  \begin{figure}[ht]
  \centering
  \begin{tikzpicture}[x=.6cm, y=.6cm]
    \snakeHook{0}{0}{5}{3}
      \draw[decorate, decoration={brace, mirror, amplitude=10pt}] (0,-0.4) -- (5,-0.4) node[midway, below=12pt] {$a_1$ boxes};
      \draw[decorate, decoration={brace, mirror, amplitude=10pt}] (5.8,0) -- (5.8,3) node[midway, right=12pt] {$a_2$ boxes};
    \fill (0,0) circle(2pt) node[left]{$\sg_1$};
    \fill (5,3) circle(2pt) node[right]{$\fg_1$};
    \fill (4,1) circle(2pt) node[above left]{$\sg_2$};
    \fill (5,0) circle(2pt) node[below right]{$\tg$};
    \fill (4,3) circle(2pt) node[left]{$\fg_2$};
    \end{tikzpicture}
    \caption{Starting and finishing points in a hook}
    \label{fig:hook}
  \end{figure}

  There are~$a_1$ paths from~$\sg_1$ to~$\sg_2$, parametrized by the abscissa~$k$ of the vertical step. Hence, one has $\Ga(\sg_1,\sg_2)=\sum_{k=0}^{a_{1}-1}q^k=[a_1]_q$. Similarly, $\Ga(\sg_2,\fg_1)=[a_2]_q$. There is a unique path from~$\sg_2$ to~$\fg_2$, the area it delimits is zero, so $\Ga(\sg_2,\fg_2)=1$. A path from~$\sg_1$ to~$\fg_2$ necessarily passes through~$\sg_2$, so $\Ga(\sg_1,\fg_2)=\Ga(\sg_1,\sg_2)\Ga(\sg_2,\fg_2)=[a_1]_q$.
  
  Any path from~$\sg_1$ to~$\fg_1$ must cross the line of points $(x,y)$ such that $x+y=a_1$ somewhere. Since there are only two vertices of~$G(a_1,a_{2})$ on this line, such a path passes either through~$\tg=(a_1,0)$, or through~$\sg_2=(a_1-1,1)$. The unique path $\sg_1\to\tg\to\fg_1$ has weight~$a_1+a_2-1$ (the total number of boxes). Let~$\ga$ be the concatenation of a path~$\ga_1$ from~$\sg_1$ to~$\sg_2$ and a path~$\ga_2$ from~$\sg_2$ to~$\fg_1$: it delimits an area $\a(\ga)=\a(\ga_1)+\a(\ga_2)$, so the weighted sum of paths $\ga:\sg_1\to\sg_2\to\fg_1$ is the product~$\Ga(\sg_1,\sg_2)\Ga(\sg_2,\fg_1)$. This gives the last coefficient:
  \[\Ga(\sg_1,\fg_1)=\Ga(\sg_1,\tg)\Ga(\tg,\fg_1)+\Ga(\sg_1,\sg_2)\Ga(\sg_2,\fg_1)=q^{a_1+a_2-1}+[a_1]_q[a_2]_q.\qedhere\]
\end{proof}

\begin{prop}\label{A1A2p}
    Let $(a_1,\dots,a_{2p})$ be an even-length sequence of positive integers. Let~$\sg_1$, $\sg_2$, $\fg_1$, $\fg_2$ be the special vertices on the snake graph~$G(a_1,\dots,a_{2p})$. Up to a power of~$q$, one has
    \[[A_{a_1}\cdots A_{a_{2p}}]_q=\begin{pmatrix}q\Ga(\sg_1,\fg_1)&\Ga(\sg_1,\fg_2)\\q\Ga(\sg_2,\fg_1)&\Ga(\sg_2,\fg_2)\end{pmatrix}.\]
\end{prop}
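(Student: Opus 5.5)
We argue by induction on~$p$, the case $p=1$ being proposition~\ref{A1A2q_chemins}. Since $A\mapsto[A]_q$ is a morphism into~$\PGL_2$, up to $\pm q^k$ we have
\[[A_{a_1}\cdots A_{a_{2p}}]_q=[A_{a_1}A_{a_2}]_q\,[A_{a_3}\cdots A_{a_{2p}}]_q .\]
Write the first factor with the hook $H=G(a_1,a_2)$ and its vertices $\sg_1,\sg_2,\fg_1^H,\fg_2^H$. Write the second factor with the snake $G'=G(a_3,\dots,a_{2p})$ and its vertices $\sg_1',\sg_2',\fg_1,\fg_2$. The induction hypothesis applies to~$G'$.

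Multiplying the two matrices gives, for instance, the top-left entry
\[q^2\Ga_H(\sg_1,\fg_1^H)\Ga'(\sg_1',\fg_1)+q\,\Ga_H(\sg_1,\fg_2^H)\Ga'(\sg_2',\fg_1).\]
Similar formulas hold for the other three entries. After pulling out a global factor~$q$, these are exactly the target entries if we have the following \emph{transfer-matrix identity} for paths in~$G=G(a_1,\dots,a_{2p})$:
\[\Ga(\sg_i,\fg_j)=q\,\Ga_H(\sg_i,\fg_1^H)\Ga'(\sg_1',\fg_j)+\Ga_H(\sg_i,\fg_2^H)\Ga'(\sg_2',\fg_j).\]

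To prove this identity, place $G'$ inside $G$ by translating it by $\fg_1^H-\sg_1'$ shifted appropriately. Then $G'$ begins one box above~$H$: the glued box (grey in figure~\ref{fig:snake}) has bottom edge $[\fg_2^H,\fg_1^H]$ and top edge $[\sg_1',\sg_1'+(1,0)]$. Looking at the coordinates, $\sg_1'$ sits at $\fg_2^H+(0,1)$ and $\sg_2'=\sg_1'+(a_3-1,1)$.

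Any path from $\sg_i$ to a vertex of~$G'$ must cross the antidiagonal through $\fg_2^H$ and $\fg_1^H$. Only these two vertices lie on it, so the path passes through exactly one of them.

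Through $\fg_2^H$, the only way to continue is up to $\sg_1'$. A path from~$\sg_1'$ in $G'$ either goes right first, to $\sg_1'+(1,0)=\fg_1^H+(0,1)$, or up. We need to identify which paths of~$G'$ correspond to each case. This suggests rewriting the decomposition as a crossing of the next antidiagonal, the one through $\sg_1'$ and $\fg_1^H+(0,1)$: every path in $G$ from $\sg_i$ to $\fg_j$ passes through exactly one of these two vertices.

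Through $\sg_1'$, the path is a path of $H$ ending at $\fg_2^H$, followed by the vertical glue edge, followed by a path of $G'$ from $\sg_1'$. Through $\fg_1^H+(0,1)$, the path either comes from $\fg_1^H$ by a vertical edge (the glue box lies to its left, contributing $q$), or from $\sg_1'$ by a horizontal edge. This double counting is what the second coordinate $\sg_2'$ absorbs.

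The \textbf{main obstacle} is making this bookkeeping match exactly. I would check carefully that the concatenation of paths is additive in area, including the glued box and the boxes of~$G'$ lying above or left. I would also check that the induction hypothesis's entries $q\Ga'(\sg_2',\cdot)$ and $\Ga'(\sg_2',\cdot)$ account for the paths via $\sg_1'\to$ right. If the direct route fails, the fallback is to redo the induction peeling off the last hook, $[A_{a_1}\cdots A_{a_{2p-2}}]_q[A_{a_{2p-1}}A_{a_{2p}}]_q$, and split paths at the antidiagonal entering the last hook. The roles of $\sg_2$ and $\fg_2$ are symmetric under the $180^\circ$ rotation of the snake, and the $p=1$ computation of proposition~\ref{A1A2q_chemins}, splitting through $\tg$ or $\sg_2$, serves as the template.

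Finally, areas are additive because the region above-left of a concatenated path in~$G$ is the disjoint union of the corresponding regions in $H$, in the glue box, and in~$G'$. The overall factors $\pm q^k$ are discarded, as allowed in the statement.
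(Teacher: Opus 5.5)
Your algebraic reduction is correct and is the same as the paper's. Multiplying $[A_{a_1}A_{a_2}]_q$ by $[A_{a_3}\cdots A_{a_{2p}}]_q$ reduces the statement to the transfer identity $\Ga(\sg_i,\fg_j)=q\,\Ga_H(\sg_i,\fg_1^H)\Ga'(\sg_1',\fg_j)+\Ga_H(\sg_i,\fg_2^H)\Ga'(\sg_2',\fg_j)$. The product matrix then equals the target entry by entry, so no global factor $q$ needs to be removed.

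The gap is that this identity, which is the entire combinatorial content of the proposition, is never proved, and your sketch rests on a wrong picture of the gluing. In $G(a_1,\dots,a_{2p})$ the snake $G'$ is attached to the \emph{right} of the grey box, not on top of it:
$\sg_1'=\fg_1^H=(a_1,a_2)$, the bottom row of $G'$ is $[a_1,a_1+a_3]\times[a_2,a_2+1]$, level with the grey box $[a_1-1,a_1]\times[a_2,a_2+1]$, and $\sg_2'=(a_1+a_3-1,a_2+1)$. This is forced by the formula $\fg_1=(a_1+a_3+\cdots,a_2+a_4+\cdots)$ and by the figure. Your first instinct, translating by $\fg_1^H-\sg_1'$, was right; the extra shift breaks it. With your placement $\sg_1'=\fg_2^H+(0,1)$ you get a different snake on which the identity is false. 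For $(1,1,1,1)$ your graph is a column of three boxes with $4$ paths from $\sg_1$ to $\fg_1$, whereas the right-hand side at $q=1$ is $2\cdot 2+1\cdot 1=5$.

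Moreover, the pairs you call antidiagonals, $\{\fg_2^H,\fg_1^H\}$ and $\{\sg_1',\fg_1^H+(0,1)\}$, are horizontal. A path may visit both points of such a pair, and from $\fg_2^H$ it may go right as well as up. So your ``exactly one'' claims are false, and the bookkeeping is left as an unresolved ``main obstacle'' with an unexecuted fallback.

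The missing step is to cut along the genuine antidiagonal $x+y=a_1+a_2$. It contains exactly two vertices of $G$: $\fg_1^H=\sg_1'$ and the top left corner $\ug=(a_1-1,a_2+1)$ of the grey box.
\begin{itemize}
\item A path through $\fg_1^H$ is a path of $H$ followed by a path of $G'$, with the grey box lying above-left of it. Its contribution is $q\,\Ga_H(\sg_i,\fg_1^H)\Ga'(\sg_1',\fg_j)$.
\item A path through $\ug$ must arrive from $\fg_2^H$ along the left side of the grey box. It is then forced to run right along the line $y=a_2+1$ until it reaches $\sg_2'$, since no upward edge leaves this line before abscissa $a_1+a_3-1$. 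This segment encloses no box, so its contribution is $\Ga_H(\sg_i,\fg_2^H)\Ga'(\sg_2',\fg_j)$.
\end{itemize}
Adding the two cases gives the identity, and the induction closes without the fallback.
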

\begin{proof}
  We proceed by induction on~$p$. For~$p=1$, this is proposition~\ref{A1A2q_chemins}. Let $p\ge2$. We assume the assertion is true for sequences of length~$2(p-1)$. Let~$(a_1,\dots,a_{2p})$ be a sequence of positive integers. The snake graph~$G(a_1,\dots,a_{2p})$ is obtained from~$G(a_1,a_2)$ by stacking a box above, then glueing the snake graph~$G(a_3,\dots,a_{2p})$ on the right (figure~\ref{fig:snake_empile}).
  \begin{figure}[ht!]
    \centering
    \begin{tikzpicture}[x=.6cm, y=.6cm]
      \snakeHook{0}{0}{5}{3}
      \snakeGa{4}{3}
      \foreach \jj in {1,...,4} {
        \snakeD{4+\jj}{3}
      }
      \snakeN{8}{4}
      \draw[>=latex, ->] (5,3) --++(1,0);
      \draw[>=latex, ->] (5,3) --++(0,1);
      \draw[>=latex, ->] (5,4) --++(1,0);
      \draw[>=latex, ->, dashed] (8,5) --++(0,.5) arc(180:90:2) --++(2,0) arc(-90:0:1.5);
      \draw[>=latex, ->, dashed] (9,5) --++(0,.5) arc(180:90:1) --++(2,0) arc(-90:0:2.5);
      \draw[>=latex, ->] (13.5,9.)--++(1,0);
      \draw[>=latex, ->] (13.5,9)--++(0,1);
      \draw[>=latex, ->] (13.5,10) node[left]{$\fg_2$}--++(1,0);
      \draw[>=latex, ->] (14.5,9)--++(0,1) node[right]{$\fg_1$};
      \node[right] at (10,3.5) {$G(a_3,\dots,a_{2p})$};
      \node[right] at (6,1) {$G(a_1,a_{2})$};
    \fill (0,0) circle(2pt) node[left]{$\sg_1$};
    \fill (5,3) circle(2pt) node[below right]{$\fg'_1=\sg'_1$};
    \fill (8,4) circle(2pt) node[above left]{$\sg'_2$};
    \fill (4,1) circle(2pt) node[above left]{$\sg_2$};
    \fill (4,3) circle(2pt) node[left]{$\fg'_2$};
    \fill (4,4) circle(2pt) node[above left]{$\ug$};
    \end{tikzpicture}
    \caption{Stacking of a hook and a snake graph}
    \label{fig:snake_empile}
  \end{figure}

  Let $\fg'_1=(a_1,a_2)$ and $\fg'_2=(a_1-1,a_2)$ be the finishing points of the first hook~$G(a_1,a_2)$, and let~$\ug=(a_1-1,a_2+1)$ be the top left corner of the stacked box (figure~\ref{fig:snake_empile}). Let~$\sg'_2=(a_1+a_3-1,a_2+1)$: this is the second starting point of~$G(a_3,\dots,a_{2p})$ (after a translation); notice that the first one is $\sg'_1=\fg'_1$. With this notation, the induction hypothesis is that up to a sign and a power of~$q$,
  \[[A_{a_3}\cdots A_{a_{2p}}]_q=\begin{pmatrix}q\Ga(\sg'_1,\fg_1)&\Ga(\sg'_1,\fg_2)\\q\Ga(\sg'_2,\fg_1)&\Ga(\sg'_2,\fg_2)\end{pmatrix}.\]

  Let $i,j\in\{1,2\}$. Any path from~$\sg_i$ to~$\fg_j$ either must cross the line of points $(x,y)$ such that $x+y=a_1+a_2$ somewhere. Since there are only two vertices of~$G(a_1,\dots,a_{2p})$ on this line, such a path passes either through~$\fg'_1$ or through~$\ug$. On the one hand, any path through~$\ug$ passes through~$\fg'_2$ and~$\sg'_1$, and the section between these two points does not change the delimited area. Hence, the weighted sum of paths passing through~$\ug$ is $\Ga(\sg_i,\fg'_2)\Ga(\sg'_2,\fg_j)$. On the other hand, the weighted sum of a path through~$\fg'_1$ is $q\Ga(\sg_i,\fg'_1)\Ga(\sg'_1,\fg_j)$, where the extra~$q$ comes from the greyed box (recall $\sg'_1=\fg'_1$). Hence,
  \[\Ga(\sg_i,\fg_j)=q\Ga(\sg_i,\fg'_1)\Ga(\sg'_1,\fg_j)+\Ga(\sg_i,\fg'_2)\Ga(\sg'_2,\fg_j).\]
  This justifies the last equality of the following computation: up to a sign and a power of~$q$,
  \begin{align*}
    [A_{a_1}\cdots A_{a_{2p}}]_q
    &=[A_{a_1}A_{a_2}]_q[A_{a_3}\cdots A_{a_{2p}}]_q\\
    &=\begin{pmatrix}q\Ga(\sg_1,\fg'_1)&\Ga(\sg_1,\fg'_2)\\q\Ga(\sg_2,\fg'_1)&\Ga(\sg_2,\fg'_2)\end{pmatrix}
      \begin{pmatrix}q\Ga(\sg'_1,\fg_1)&\Ga(\sg'_1,\fg_2)\\q\Ga(\sg'_2,\fg_1)&\Ga(\sg'_2,\fg_2)\end{pmatrix}\\
    &=\begin{pmatrix}q^2\Ga(\sg_1,\fg'_1)\Ga(\sg'_1,\fg_1)+q\Ga(\sg_1,\fg'_2)\Ga(\sg'_2,\fg_1)&q\Ga(\sg_1,\fg'_1)\Ga(\sg'_1,\fg_2)+\Ga(\sg_1,\fg'_2)\Ga(\sg'_2,\fg_2)\\q^2\Ga(\sg_2,\fg'_1)\Ga(\sg'_1,\fg_1)+q\Ga(\sg_2,\fg'_2)\Ga(\sg'_2,\fg_1)&q\Ga(\sg_2,\fg'_1)\Ga(\sg'_1,\fg_2)+\Ga(\sg_2,\fg'_2)\Ga(\sg'_2,\fg_2)\end{pmatrix}\\
    &=\begin{pmatrix}q\Ga(\sg_1,\fg_1)&\Ga(\sg_1,\fg_2)\\q\Ga(\sg_2,\fg_1)&\Ga(\sg_2,\fg_2)\end{pmatrix}.
  \end{align*}
  This allows to conclude the recurrence.
\end{proof}

\begin{coro}
  If $m/n=[a_1,\dots,a_{2p}]$, the numerator of $[m/n]_q^\sharp$ is $\Ga(\sg_1,\fg_1)$ and the denominator of~$[m/n]_q^\sharp$ is~$\Ga(\sg_2,\fg_1)$.
\end{coro}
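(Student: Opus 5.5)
The plan is to read the corollary off proposition~\ref{A1A2p} by letting the matrix act on~$\infty$. Since $m/n=[a_1,\dots,a_{2p}]=A_{a_1}\cdots A_{a_{2p}}\cdot\infty$ and $A_{a_1}\cdots A_{a_{2p}}$ lies in~$\SL_2(\ZM)$, we have
\[
\left[\frac mn\right]_q^\sharp=[A_{a_1}\cdots A_{a_{2p}}]_q\cdot\infty .
\]
To justify this, I would use the definition together with the stabilizer statement of corollary~\ref{q-action}. The matrix $M=A_{a_1}\cdots A_{a_{2p}}$ has first column $(m',n')$ with $m'/n'=m/n$. After possibly multiplying by $S^2=-\If_2$, which acts trivially, we may assume $(m',n')=(m,n)$ with the same sign convention as in the definition. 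Then $M$ is of the form $\left(\begin{smallmatrix}m&-v\\n&u\end{smallmatrix}\right)$, and the $q$-action applied to~$\infty$ is exactly $[m/n]_q^\sharp$.

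Next I would apply proposition~\ref{A1A2p}, which expresses $[M]_q$ up to a central factor $\pm q^k$. Central factors do not change the homography, so
\[
[M]_q\cdot\infty=\frac{q\Ga(\sg_1,\fg_1)}{q\Ga(\sg_2,\fg_1)}=\frac{\Ga(\sg_1,\fg_1)}{\Ga(\sg_2,\fg_1)},
\]
using the convention $(a\infty+b)/(c\infty+d)=a/c$. This step needs $\Ga(\sg_2,\fg_1)\neq0$. That holds because it is a nonzero polynomial with nonnegative coefficients: some path from~$\sg_2$ to~$\fg_1$ exists, for instance by going up the first pile and then following the snake.

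The main obstacle is making ``numerator'' and ``denominator'' meaningful. This requires that the fraction $\Ga(\sg_1,\fg_1)/\Ga(\sg_2,\fg_1)$ be in lowest terms, with a normalization such as constant term~$1$ or positive coefficients. For coprimality I would take determinants in proposition~\ref{A1A2p}. Since $\det[M]_q$ is a unit $\pm q^N$ in $\ZM[q,q^{-1}]$, we get
\[
q\bigl(\Ga(\sg_1,\fg_1)\Ga(\sg_2,\fg_2)-\Ga(\sg_1,\fg_2)\Ga(\sg_2,\fg_1)\bigr)=\pm q^N .
\]
Any common factor of $\Ga(\sg_1,\fg_1)$ and $\Ga(\sg_2,\fg_1)$ in~$\ZM[q]$ therefore divides a power of~$q$. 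It remains to rule out $q$ itself as a common factor. For this I would exhibit, in each of the two sums, the path of zero area. Indeed, the path that goes up first as far as possible and then right, hugging the upper-left boundary, leaves no box above or to its left. Hence both polynomials have constant term~$1$. So the two polynomials are coprime and normalized, and they are the numerator and the denominator respectively.
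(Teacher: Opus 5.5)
Your proposal is correct and follows the paper's proof essentially step by step: you let the path matrix of Proposition~\ref{A1A2p} act on~$\infty$, then use that its determinant is $\pm q^N$ together with the zero-area path along the upper-left (top) border, which gives constant term~$1$, to conclude coprimality. In addition, you justify that $[A_{a_1}\cdots A_{a_{2p}}]_q\cdot\infty$ agrees with the definition of $[m/n]_q^\sharp$ (up to the trivially acting $S^2$), a step the paper takes for granted.
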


\begin{proof}
  By proposition~\ref{A1A2p},
  \[\left[\frac{m}{n}\right]_q^\sharp=[A_{a_1}\cdots A_{a_{2p}}]_q\cdot\infty
    =\frac{q\Ga(\sg_1,\fg_1)}{q\Ga(\sg_2,\fg_1)}=\frac{\Ga(\sg_1,\fg_1)}{\Ga(\sg_2,\fg_1)}.\]
  It is thus enough to show that these two polynomials are relatively prime. Since the determinant of the matrix in proposition~\ref{A1A2p} is of the form~$\eps q^k$ for some~$\eps\in\{-1,1\}$ and some~$k\in\ZM$, the gcd of~$\Ga(\sg_1,\fg_1)$ and~$\Ga(\sg_2,\fg_1)$ is of this form. But both polynomials evaluate to~$1$ at $q=0$, since there is a unique path of area zero from~$\sg_1$ or~$\sg_2$ to~$\fg_1$ (follow the top border of the snake graph). This implies the claim: the gcd is~$\pm1$ and~$\Ga(\sg_1,\fg_1)$ and~$\Ga(\sg_2,\fg_1)$ are indeed the numerator and the denominator of~$[m/n]_q^\sharp$.
\end{proof}

\section{Modified weighted paths on snake graphs}

To interpret the numerator and the denominator of the left $q$-deformed rationals~$[m/n]_q^\flat$, one must slightly change the weight function on the snake graphs. The modification consists in attributing an area equal to~$2$ to the top right box of the graph. Visually, this amounts to transforming the corresponding square into a rectangle. For instance, figure~\ref{fig:mod-snake-2} pictures~$G^\flat(5,1)$, $G^\flat(5,2)$ and~$G^\flat(5,4)$.

\begin{defi}
  Let~$(a_1,\dots,a_{2p})$ be an even-length sequence of positive integers. The modified snake graph~$G^\flat(a_1,\dots,a_{2p})$ is the graph~$G(a_1,\dots,a_{2p})$ equipped with a modified weight function~$\a^\flat$: the weight of a path~$\ga$ from a vertex~$\sg$ to a vertex~$\fg$ is
  \[\a^\flat(\ga)=
    \begin{cases}
      \a(\ga)+1&\text{if $\fg=\fg_1$ and the box containing~$\fg_1$ is above~$\ga$};\\
      \a(\ga)&\text{otherwise.}
    \end{cases}\]

  Given~$G^\flat(a_1,\dots,a_{2p})$ and two vertices~$\sg$ and~$\fg$, the modified weighted sum is
  \[\Ga^\flat(\sg,\fg)=\sum_{\ga:\sg\to\fg}q^{\a^\flat(\ga)}.\]
    
  As in $G(a_1,\dots,a_{2p})$, there are four special vertices~$\sg_1$, $\sg_2$, $\fg_1$, $\fg_2$ (figure~\ref{fig:mod-snake-2}).
\end{defi}

    \begin{figure}[ht!]
    \centering
    \begin{tikzpicture}[x=.6cm, y=.6cm]
      \draw[>=latex, ->] (0,0) --++(0,1);
      \foreach \k in {0,1,2,3} {
        \snakeD{\k}{0}
      }
      \draw[>=latex, ->] (4,0) --++(2,0);
      \draw[>=latex, ->] (4,1) --++(2,0);
      \draw[>=latex, ->] (6,0) --++(0,1);
      \draw[decorate, decoration={brace, mirror, amplitude=10pt}] (0,-0.4) -- (6,-0.4) node[midway, below=12pt] {$a_{2p-1}$ boxes};
      \node at (3,-2.5) {$G^\flat(a_1,\dots,a_{2p-1},1)$};
    \draw[>=latex, ->] (9,0) --++(0,1);
      \foreach \k in {0,1,2,3,4} {
        \snakeD{9+\k}{0}
      }
    \draw[>=latex, ->] (13,1)--++(0,2);
    \draw[>=latex, ->] (14,1)--++(0,2);
    \draw[>=latex, ->] (13,3)--++(1,0);
     \fill (0,0) circle(2pt) node[left]{$\sg_1$};
    \fill (6,1) circle(2pt) node[right]{$\fg_1$};
    \fill (4,1) circle(2pt) node[above]{$\sg_2=\fg_2$};
    \fill (9,0) circle(2pt) node[left]{$\sg_1$};
    \fill (14,3) circle(2pt) node[right]{$\fg_1$};
    \fill (13,1) circle(2pt) node[above left]{$\sg_2$};
    \fill (13,3) circle(2pt) node[left]{$\fg_2$};
      \draw[decorate, decoration={brace, mirror, amplitude=10pt}] (9,-0.4) -- (14,-.4) node[midway, below=12pt] {$a_{2p-1}$ boxes};
      \node at (11.5,-2.5) {$G^\flat(a_1,\dots,a_{2p-1},2)$};
    \snakeHook{17}{0}{5}{3}
    \draw[>=latex, ->] (21,3)--++(0,2);
    \draw[>=latex, ->] (22,3)--++(0,2);
    \draw[>=latex, ->] (21,5)--++(1,0);
    \fill (17,0) circle(2pt) node[left]{$\sg_1$};
    \fill (22,5) circle(2pt) node[right]{$\fg_1$};
    \fill (21,1) circle(2pt) node[above left]{$\sg_2$};
    \fill (21,5) circle(2pt) node[left]{$\fg_2$};
      \draw[decorate, decoration={brace, mirror, amplitude=10pt}] (17,-0.4) -- (22,-.4) node[midway, below=12pt] {$a_{2p-1}$ boxes};
      \draw[decorate, decoration={brace, mirror, amplitude=10pt}] (22.9,0) -- (22.9,5) node[midway, right=12pt] {$a_{2p}$ boxes};
      \node at (19.5,-2.5) {$G^\flat(a_1,\dots,a_{2p-1},a_{2p})$, $a_{2p}>2$};
    \end{tikzpicture}
    \caption{Modified snake graphs (only the last hook is represented)}
    \label{fig:mod-snake-2}
  \end{figure}


The reason for increasing the area of the top right box is the following lemma. Recall that for a positive integer~$a$,
\[ [a]_q^\flat=q^{a}+q^{a-2}+\cdots+q+1\quad\text{and}\quad [a]_q=[a]_q^\sharp=q^{a-1}+q^{a-2}+\cdots+q+1.\]

\begin{lemm}
  Let $a_1$, $a_2$ be positive integers. 
  \begin{enumerate}[{\em(i)}]
  \item The modified weighted sums in $G^\flat(a_1,a_2)$ are:
    \[\begin{pmatrix}\Ga^\flat(\sg_1,\fg_1)&\Ga^\flat(\sg_1,\fg_2)\\\Ga^\flat(\sg_2,\fg_1)&\Ga^\flat(\sg_2,\fg_2)\end{pmatrix}=
    \begin{pmatrix}q^{a_1+a_2}+[a_1]_q\bigl(q[a_2]_q+1-q\bigr)&[a_1]_q\\q[a_2]_q+1-q&1\end{pmatrix}.\]
  \item Up to a sign and a power of~$q$, one has the following matrix product:
    \[ [A_{a_1}A_{a_2}]_q\begin{pmatrix}1\\1-q\end{pmatrix}
      =\begin{pmatrix}\Ga^\flat(\sg_1,\fg_1)\\\Ga^\flat(\sg_2,\fg_1)\end{pmatrix}.\]
  \end{enumerate}
\end{lemm}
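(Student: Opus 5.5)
The plan is to compute the four modified sums directly from the unmodified ones of proposition~\ref{A1A2q_chemins}, and then to check (ii) by a matrix multiplication based on proposition~\ref{A1A2q}.

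For (i), the weight is modified only for paths ending at~$\fg_1$, so $\Ga^\flat(\sg_i,\fg_2)=\Ga(\sg_i,\fg_2)$. This gives the entries $[a_1]_q$ and~$1$ in the second column. For paths ending at~$\fg_1$ in the hook $G(a_1,a_2)$, we classify them by their last arrow. The box containing~$\fg_1$ is the top right box of the vertical pile, and it lies above~$\ga$ exactly when~$\ga$ ends with the vertical arrow from $(a_1,a_2-1)$ to~$\fg_1$. Otherwise $\ga$ ends with the horizontal arrow from~$\fg_2$.

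We first treat $\Ga^\flat(\sg_2,\fg_1)$. The paths from~$\sg_2$ are parametrized by the height~$k\in\{1,\dots,a_2\}$ of their horizontal step, and they have area $k-1$. Only the path with $k=a_2$ avoids the top right box. Hence we add $q^{j}(q-1)$ to every term except the last. The result is $\sum_{k=1}^{a_2-1}q^{k}+q^{a_2-1}$, which should be rewritten as $q[a_2]_q+1-q$ (indeed $q[a_2]_q=q+\dots+q^{a_2}$; the bookkeeping of which single term keeps its exponent must be checked carefully, and the edge case $a_2=1$ is exactly where the box is horizontal).

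Next we treat $\Ga^\flat(\sg_1,\fg_1)$ with the decomposition used before. The path through~$\tg$ has area $a_1+a_2-1$ and passes under the top right box, so it gets weight $q^{a_1+a_2}$. A path through~$\sg_2$ is a concatenation, and the modification only concerns its second half. This gives $[a_1]_q\Ga^\flat(\sg_2,\fg_1)$. Together these yield the stated formula. The case $a_2=1$ needs separate care, since then $\sg_2=\fg_2$ and the top right box is the last horizontal box; the same split by the final arrow still works.

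For (ii), we multiply the matrix of proposition~\ref{A1A2q} by $\left(\begin{smallmatrix}1\\1-q\end{smallmatrix}\right)$. The first entry is $q[a_1][a_2]+q^{a_1+a_2}+[a_1](1-q)$, which is $\Ga^\flat(\sg_1,\fg_1)$ by (i). The second entry is $q[a_2]+1-q=\Ga^\flat(\sg_2,\fg_1)$. This is immediate, and the factor $\pm q^k$ is harmless.

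The main obstacle is the exact exponent bookkeeping in the count for $\Ga^\flat(\sg_2,\fg_1)$: determining which path escapes the extra~$q$ and reconciling the sum with $q[a_2]_q+1-q$. This also covers the degenerate case $a_2=1$. I would first verify small cases such as $a_2=1,2$ against the figure.
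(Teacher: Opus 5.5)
Your overall plan matches the paper's proof: the second column is unchanged, the paths $\sg_1\to\fg_1$ are split according to whether they pass through $\tg$ or $\sg_2$, and (ii) is a direct multiplication. However, the key count for $\Ga^\flat(\sg_2,\fg_1)$ is wrong as written, and the ``rewriting'' you postpone cannot be carried out. The path from $\sg_2$ whose horizontal step is at height $k\in\{1,\dots,a_2\}$ has area $a_2-k$, not $k-1$. The boxes above or to the left of it are the boxes of the column lying above the horizontal step, to the left of the final vertical segment on the line $x=a_1$. This agrees with the paper's convention that the path following the top-left border has area~$0$. With your areas you obtain $q+\dots+q^{a_2-1}+q^{a_2-1}$. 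For $a_2=2$ this equals $2q$, whereas $q[2]_q+1-q=1+q^2$, so no bookkeeping will reconcile it with the target.

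With the correct areas, your observation still does the job: only the path with $k=a_2$ (the one ending with the horizontal arrow from $\fg_2$) avoids the top right box. That path is the zero-area path and keeps weight~$1$. Each other path gets weight $q^{a_2-k+1}$ for $k=1,\dots,a_2-1$. Hence $\Ga^\flat(\sg_2,\fg_1)=1+q^2+q^3+\dots+q^{a_2}=q[a_2]_q+1-q$, and this also covers $a_2=1$ (empty sum). This is exactly the paper's computation; it is phrased there in the stretched picture, where the horizontal step sits at ordinate $1,\dots,a_2-1$ or $a_2+1$. Once this is fixed, the rest of your argument goes through as in the paper: the path through $\tg$ contributes $q^{a_1+a_2}$, the paths through $\sg_2$ contribute $[a_1]_q\Ga^\flat(\sg_2,\fg_1)$, and part (ii) follows by multiplication.
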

\begin{proof}
  (i) The weighted sum of paths from~$\sg_1$ to~$\fg_2$ is the same as in the usual snake graph, that is, $\Ga^\flat(\sg_1,\fg_2)=[a_1]_q$. There is a unique path from~$\sg_2$ to~$\fg_2$: it is vertical and it delimits a null area, so that $\Ga^\flat(\sg_2,\fg_2)=1$.

  There are~$a_2$ paths from~$\sg_2$ to~$\fg_1$, parametrized by the ordinate of the horizontal step, which belongs to $\{1,2,\dots,a_{2}-1,a_2+1\}$. Taking into account that the highest box contributes for an area of~$2$, this implies that
  \[\Ga^\flat(\sg_2,\fg_1)=\begin{cases}q^{a_2}+q^{a_2-1}+\cdots+q^2+1&\text{if}\ a_2\ge2;\\1&\text{if}\ a_2=1.\end{cases}\]

  In both cases, one has $\Ga^\flat(\sg_2,\fg_1)=q[a_2]_q+1-q$.
  
  Eventually, there are two kinds of paths from~$\sg_1$ to~$\fg_1$: the ones that pass through~$\sg_2$, and the (unique) one that passes through the bottom right corner, which has weight~$q^{a_1+a_2}$. This shows that
  \[\Ga^\flat(\sg_1,\fg_1)=\Ga(\sg_1,\sg_2)\Ga(\sg_2,\fg_1)+q^{a_1+a_2}
    =[a_1]_q\bigl(q[a_2]_q+1-q\bigr)+q^{a_1+a_2}.\]

  (ii) On the matrix side, this is an easy computation by proposition~\ref{A1A2q}:
  \[ [A_{a_1}A_{a_2}]_q\begin{pmatrix}1\\1-q\end{pmatrix}
    =\begin{pmatrix}q[a_1]_q[a_2]_q+q^{a_1+a_2}+(1-q)[a_1]_q\\
       q[a_2]_q+1-q\end{pmatrix}
    =\begin{pmatrix}[a_1]_q\bigl(q[a_2]_q+1-q\bigr)+q^{a_1+a_2}\\
       q[a_2]_q+1-q\end{pmatrix},  \]
  the same as $\binom{\Ga^\flat(\sg_1,\fg_1)}{\Ga^\flat(\sg_2,\fg_1)}$, which was computed just above.
\end{proof}

\begin{coro}
  If $m/n=[a_1,\dots,a_{2p}]$, the numerator of $[m/n]_q^\flat$ is $\Ga^\flat(\sg_1,\fg_1)$ and the denominator of~$[m/n]_q^\flat$ is~$\Ga^\flat(\sg_2,\fg_1)$.
\end{coro}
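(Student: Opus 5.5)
We write $[m/n]_q^\flat=[A_{a_1}\cdots A_{a_{2p}}]_q\cdot\frac{1}{1-q}$. In vector form, this is the ratio of the two coordinates of $[A_{a_1}\cdots A_{a_{2p}}]_q\binom{1}{1-q}$, up to a common factor $\pm q^k$. We therefore aim to prove, by induction on~$p$, that up to a sign and a power of~$q$,
\[[A_{a_1}\cdots A_{a_{2p}}]_q\begin{pmatrix}1\\1-q\end{pmatrix}=\begin{pmatrix}\Ga^\flat(\sg_1,\fg_1)\\\Ga^\flat(\sg_2,\fg_1)\end{pmatrix}\]
on $G^\flat(a_1,\dots,a_{2p})$. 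The case $p=1$ is assertion~(ii) of the previous lemma.

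For the inductive step we copy the decomposition from the proof of proposition~\ref{A1A2p}. We split $G^\flat(a_1,\dots,a_{2p})$ into the hook $G(a_1,a_2)$, the stacked grey box, and a translated copy of $G^\flat(a_3,\dots,a_{2p})$. The modified box, the one containing~$\fg_1$, lies in the last hook, so it belongs to the translated copy. For $i\in\{1,2\}$, every path $\sg_i\to\fg_1$ crosses the antidiagonal $x+y=a_1+a_2$ either at~$\fg'_1$ or at~$\ug$. The area bookkeeping from proposition~\ref{A1A2p} applies verbatim, with the extra $+1$ counted on the second portion only. This gives
\[\Ga^\flat(\sg_i,\fg_1)=q\Ga(\sg_i,\fg'_1)\Ga^\flat(\sg'_1,\fg_1)+\Ga(\sg_i,\fg'_2)\Ga^\flat(\sg'_2,\fg_1).\]
In matrix form, this reads
\[\binom{\Ga^\flat(\sg_1,\fg_1)}{\Ga^\flat(\sg_2,\fg_1)}=\begin{pmatrix}q\Ga(\sg_1,\fg'_1)&\Ga(\sg_1,\fg'_2)\\q\Ga(\sg_2,\fg'_1)&\Ga(\sg_2,\fg'_2)\end{pmatrix}\binom{\Ga^\flat(\sg'_1,\fg_1)}{\Ga^\flat(\sg'_2,\fg_1)}.\]
By proposition~\ref{A1A2q_chemins}, the first matrix is $[A_{a_1}A_{a_2}]_q$. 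By the induction hypothesis, the second vector is $[A_{a_3}\cdots A_{a_{2p}}]_q\binom{1}{1-q}$ up to $\pm q^k$. Multiplying the two completes the induction. Consequently $[m/n]_q^\flat=\Ga^\flat(\sg_1,\fg_1)/\Ga^\flat(\sg_2,\fg_1)$.

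It remains to show that this fraction is reduced. Write $M$ for the matrix $[A_{a_1}\cdots A_{a_{2p}}]_q$ and let $N$, $D$ be its two output coordinates, so that $\binom{N}{D}=M\binom{1}{1-q}$. Inverting $M$, whose determinant is $\pm q^k$, gives $\binom{1}{1-q}=M^{-1}\binom{N}{D}$. Hence any common divisor of $N$ and $D$ in $\ZM[q,q^{-1}]$ divides~$1$, so $\gcd(N,D)$ is a unit $\pm q^k$.

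To exclude a genuine power of~$q$, we evaluate at $q=0$. The path along the top border from $\sg_2$, respectively $\sg_1$, to $\fg_1$ has $\a^\flat=0$, since the modified box lies below it. Every other path has positive area. Therefore both sums equal~$1$ at $q=0$, and the gcd is~$\pm1$.

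The main obstacle is the inductive step: one must check that the modified box really sits in the final copy, including the degenerate cases $a_{2p}=1,2$ and $p\ge2$. One must also check that the condition ``the box containing~$\fg_1$ is above~$\ga$'' is decided entirely by the portion of~$\ga$ after the antidiagonal crossing. This is where the $+1$ is inherited correctly by $\Ga^\flat(\sg'_j,\fg_1)$.
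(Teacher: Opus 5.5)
Your proposal is correct and follows the paper's own route. The paper's proof only says to rerun the induction of proposition~\ref{A1A2p} on $G^\flat$, keeping only the $\fg_1$ column with lemma~(ii) as base case, which is exactly your recursion $\Ga^\flat(\sg_i,\fg_1)=q\Ga(\sg_i,\fg'_1)\Ga^\flat(\sg'_1,\fg_1)+\Ga(\sg_i,\fg'_2)\Ga^\flat(\sg'_2,\fg_1)$, and your reducedness argument (inverting $M$, then evaluating at $q=0$) adapts the determinant argument used for $[m/n]_q^\sharp$ to fill in a step the paper leaves unwritten.
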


\begin{proof}
  Use the same arguments as in proposition~\ref{A1A2p} with the following minor differences: replace the snake graph~$G(a_1,\dots,a_{2p})$ by~$G^\flat(a_1,\dots,a_{2p})$ and forget about~$\fg_2$.\footnote{To be completed.}
\end{proof}

\bibliographystyle{plain}
\bibliography{TIPE}

\end{document}